\def\Dj{\hbox{D\kern-.73em\raise.30ex\hbox{-}
\raise-.30ex\hbox{}}}
\def\dj{\hbox{d\kern-.33em\raise.80ex\hbox{-}
\raise-.80ex\hbox{\kern-.40em}}}
\newtheorem{theorem}{Theorem}[section]
\newtheorem{lemma}[theorem]{Lemma}
\newtheorem{proposition}[theorem]{Proposition}
\newtheorem{observation}[theorem]{Observation}
\renewcommand{\maketitle}{
	\begin{center}
%    \rule[.2em]{\textwidth}{0.353mm}
	%	\rule[1em]{\textwidth}{.353mm}
		\baselineskip=0.30in
		{\Large\bfseries \@title} \par
		\vspace{5mm}
		\baselineskip=0.2in
		{\large\bfseries \@author}\par
		\vspace{1mm}
		{\it \@address} \par
		{\small\tt \@email} \par
		\vspace{3mm}
		%{\small (Received \@date)} \par
	\end{center}
	\vspace{3mm}
}
\newcommand{\address}[1]{\def\@address{#1}}
\newcommand{\email}[1]{\def\@email{#1}}
\date{\today}
\newcommand{\acknowledgment}[1]{\vspace{5mm}\baselineskip=0.15in
	{\textbf{\textit{Acknowledgment\/}:} #1}
}
\title{On the Difference of Atom-Bond Sum-Connectivity
and Atom-Bond-Connectivity Indices}
\author{Akbar Ali$^{a,}$\footnote{Corresponding author}, Ivan Gutman$^{b}$, Izudin Red\v{z}epovi\'c$^{c}$,\\ Jaya Percival Mazorodze$^{d}$, Abeer M. Albalahi$^a$,\\ Amjad E. Hamza$^a$}
\address{$^a$Department of Mathematics, College of Science,\\ University of Ha\!'il, Ha\!'il, Saudi Arabia\\
$^b$Faculty of Science,\\ University of Kragujevac, Kragujevac, Serbia\\
$^c$Department of Natural Sciences and Mathematics,\\ State University of Novi Pazar, Novi Pazar, Serbia\\
$^d$Department of Mathematics,\\ University of Zimbabwe, Harare, Zimbabwe}
\email{akbarali.maths@gmail.com, gutman@kg.ac.rs,  iredzepovic@np.ac.rs, mazorodzejaya@gmail.com, a.albalahi@uoh.edu.sa, aboaljod2@hotmail.com}
\begin{document}

\maketitle

\begin{abstract}
The atom-bond-connectivity (ABC) index is one of the well-investigated degree-based topological indices. The atom-bond sum-connectivity (ABS) index is a modified version of the ABC index, which was introduced recently. The primary goal of the present paper is to investigate the difference between the aforementioned two indices, namely $ABS-ABC$. It is shown that the difference $ABS-ABC$ is positive for all graphs of minimum degree at least $2$ as well as for all line graphs of those graphs of order at least $5$ that are different from the path and cycle graphs. By means of computer search, the difference $ABS-ABC$ is also calculated for all trees of order at most $15$.
\end{abstract}

\onehalfspacing

\section{Introduction}

In this paper we consider finite simple graphs (i.e., graphs without directed, weighted,
and multiple edges, and without self-loops). Let $G$ be such a graph. In order to
avoid trivialities, it will be assumed that $G$ is connected. Its vertex set is $\mathbf V(G)$
and its edge set is $\mathbf E(G)$. The order and size of $G$ are $|\mathbf V(G)|=n$
and $|\mathbf E(G)|=m$, respectively. By an $n$-vertex graph, we mean a graph of order $n$. The degree $d_u = d_u(G)$ of the vertex $u\in \mathbf V(G)$
is the number of vertices adjacent to $u$. The edge connecting the vertices
$u$ and $v$ will be denoted by $uv$. A vertex with degree one is known as a pendent vertex.

For graph-theoretical terminology and notation used without being defined,
we refer the readers to the books \cite{ga,gb,gc}

In the early years of mathematical chemistry, Milan Randi\'c invented a
topological index \cite{g1} that eventually became one of the most
successfully applied graph-based molecular structure descriptors \cite{g2,g3,g4}.
It is nowadays called ``{\it connectivity index\/}'' or ``{\it Randi\'c index\/}''
and is defined as
$$
R = R(G) = \sum_{uv \in \mathbf E(G)} \frac{1}{\sqrt{d_u\,d_v}}\,.
$$

Much later, Zhou and Trinajsti\'c \cite{g5} proposed to consider the
variant of the connectivity index, in which multiplication is replaced by
summation, named ``{\it sum-connectivity index\/}'', defined as
$$
SC = SC(G) = \sum_{uv \in \mathbf E(G)} \frac{1}{\sqrt{d_u+d_v}}\,.
$$
The same authors examined the relations between $R$ and $SC$ \ \cite{g6}.

In 1998, Estrada et al. \cite{g7} conceived another modification of the
connectivity index, called ``{\it atom-bond-connectivity index\/}'',
defined as
$$
ABC = ABC(G) = \sum_{uv \in \mathbf E(G)} \sqrt{\frac{d_u+d_v-2}{d_u\,d_v}}\,.
$$
This molecular descriptor differs from the original connectivity index
by the expression $d_u+d_v-2$, which is just the degree of the edge $uv$
(= number of edges incident to $uv$).

Soon it was established that the $ABC$ index has valuable applicative
properties \cite{g8}. Its mathematical features were also much investigated,
see the recent papers \cite{Dimitrov-2021,Ghanbari-22,Husin-22}, the review \cite{g9}, and the references cited therein. Especially intriguing
is the fact that the apparently simple problem of finding the connected
$n$-vertex graph(s) with minimum $ABC$ index remained unsolved for about a decade \cite{Hosseini-22}.

Quite recently, the sum-connectivity analogue of the $ABC$ index
was put forward, defined as
$$
ABS = ABS(G) = \sum_{uv \in \mathbf E(G)} \sqrt{\frac{d_u+d_v-2}{d_u+d_v}}
$$
and named ``{\it atom-bond sum-connectivity index\/}''  \cite{g10}.
Until now, only a limited number of properties of the $ABS$ index were
determined. In \cite{g10}, the authors determined graphs having the minimum/maximum values of the $ABS$ index among all (i) general graphs (ii) (molecular) trees, with a fixed order; parallel results for the case of unicyclic graphs were obtained in the paper \cite{ABS-EJM}, where chemical applications of the $ABS$ index were also reported. (The general $ABS$ index corresponding to the general $ABC$ index \cite{Furtula-GABC,Das-GABC,Abreu-Blaya} was also proposed in \cite{ABS-EJM}; besides,  see \cite{new-01,new-02}.)  Alraqad et al. \cite{Alraqad-arXiv} addressed the problem of finding graphs attaining the minimum $ABS$ index over the class of all trees having given order or/and a fixed number of pendent vertices. Additional detail about the known mathematical properties of the $ABS$ index can be found in the recent papers \cite{new-03,new-04,new-05,new-06}.

As well known, if a graph $G$ has components $G_1$ and $G_2$,
then $ABC(G)=ABC(G_1)+ABC(G_2)$ and $ABS(G)=ABS(G_1)+ABS(G_2)$.
As a consequence of this, denoting by $P_2$ the graph of order
2 and size 1, the following holds.\\[2mm]
(a) If $G$ is any graph, and $G^+$ is a graph whose components are
$G$, an arbitrary number of isolated vertices, and an arbitrary number
of $P_2$-graphs, then $ABC(G)=ABC(G^+)$ and $ABS(G)=ABS(G^+)$.\\[2mm]
(b) if $G^{++}$ is a graph whose components are $G$, an arbitrary
number of isolated vertices, an arbitrary number of $P_2$-graphs, and
an arbitrary number of cycles of arbitrary size, then
\[ABC(G)-ABS(G) = ABC(G^{++})-ABS(G^{++}).\]
In order to avoid these trivialities, in what follows we consider only
connected graphs.
An obvious question is
how the two closely related molecular descriptors $ABC$ and $ABS$ are
related. In this paper, we provide some answers to this question. More precisely, we prove that the difference $ABS-ABC$ is positive for all graphs of minimum degree at least $2$ as well as for all line graphs of those graphs of order at least $5$ that are different from the path and cycle graphs. We also calculate the difference $ABS-ABC$ for all trees of order at most $15$ by utilizing computer software.

\vspace*{5mm}

\section{Main Results}\label{sec-2}
We start this section with a simple but notable result that if the minimum degree of a graph $G$ is at least $2$ then the ABS index of $G$ cannot be lesser than the ABC index of $G$.

\begin{proposition} \label{rem1}
Let $G$ be a connected non-trivial graph of order $n$, without pendent vertices. Then
$$
ABC(G) \leq ABS(G).
$$
Equality holds if and only if $G \cong C_n$, where $C_n$ is the $n$-vertex cycle.
\end{proposition}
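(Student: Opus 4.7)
The plan is to carry out an edgewise comparison of $ABC$ and $ABS$. Since both indices are sums over $\mathbf{E}(G)$ of terms sharing the common numerator factor $\sqrt{d_u+d_v-2}$, I would fix an arbitrary edge $uv\in\mathbf{E}(G)$ and compare the two summands
\[
\sqrt{\frac{d_u+d_v-2}{d_u\,d_v}}
\qquad\text{and}\qquad
\sqrt{\frac{d_u+d_v-2}{d_u+d_v}}.
\]
Because the hypothesis $\delta(G)\geq 2$ gives $d_u+d_v-2\geq 2>0$, the square roots are well-defined and it suffices to compare the denominators: the $ABS$ term dominates the $ABC$ term if and only if $d_u+d_v\leq d_u\,d_v$.

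The first key step is the elementary identity
\[
d_u\,d_v-(d_u+d_v)=(d_u-1)(d_v-1)-1,
\]
which I would use to rewrite the required inequality as $(d_u-1)(d_v-1)\geq 1$. Since both $d_u\geq 2$ and $d_v\geq 2$, this holds at every edge, and I would note right there the equality condition: $(d_u-1)(d_v-1)=1$ forces $d_u=d_v=2$.

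Summing the edgewise inequality over all $uv\in\mathbf{E}(G)$ yields $ABC(G)\leq ABS(G)$. For the equality case, I would argue that equality in the sum forces equality in every summand, hence $d_u=d_v=2$ for each edge $uv$, so every vertex of $G$ has degree $2$. Combined with the standing assumption that $G$ is connected and non-trivial, this characterises $G$ as the cycle $C_n$, and conversely one checks directly that $ABC(C_n)=ABS(C_n)=n/\sqrt{2}$.

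I do not anticipate any real obstacle; the entire proof rests on the pointwise inequality $(d_u-1)(d_v-1)\geq 1$, and the only thing to be careful about is to verify that $d_u+d_v-2$ is strictly positive so that dividing by $\sqrt{d_u+d_v-2}$ is legitimate both for the inequality and for the equality discussion. Both the strict positivity and the equality analysis follow immediately from the minimum-degree hypothesis.
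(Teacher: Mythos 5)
Your proof is correct and follows essentially the same route as the paper: an edgewise comparison reducing to $d_u\,d_v\geq d_u+d_v$ with equality iff $d_u=d_v=2$, which the paper states in one line and you merely spell out in more detail (via the identity $d_u\,d_v-(d_u+d_v)=(d_u-1)(d_v-1)-1$ and the explicit equality analysis).
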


\begin{proof}
For every edge $uv\in E(G)$, note that $d_u\,d_v \geq d_u+d_v$ with equality if and only if $d_u=d_v=2$ because $\min\{d_u,d_v\}\ge2$.
\end{proof}

If the order of a graph $G$ is one or two, then the equality
$ABC(G)=ABS(G)=0$ holds in a trivial manner.

\begin{proposition} \label{rem2}
Let $G$ be a connected graph possessing a vertex $x$ of degree $2$. Construct the graph $G^\star$ by
inserting a new vertex $y$ on an edge incident to $x$. Evidently, the degree of $y$ is also $2$.
Then
\begin{equation}          \label{ksi}
ABC(G)-ABS(G) = ABC(G^\star)-ABS(G^\star)\,.
\end{equation}
\end{proposition}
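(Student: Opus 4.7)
The plan is to perform the subdivision explicitly and compare, edge by edge, the contributions that change in the two sums. Denoting the two neighbours of $x$ in $G$ by $u$ and $v$, and assuming without loss of generality that $y$ is inserted on the edge $xu$, the graph $G^\star$ differs from $G$ only in that the single edge $xu$ is replaced by the two edges $xy$ and $yu$, with $d_y=2$ and every other vertex degree preserved. Consequently \eqref{ksi} will follow once I verify that $ABC(G^\star)-ABC(G)$ and $ABS(G^\star)-ABS(G)$ coincide.

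The first observation I would record is the accidental fact that whenever an edge $e=ab$ has $d_a=2$, its $ABC$-summand is identically $1/\sqrt{2}$, since $\sqrt{(2+d_b-2)/(2d_b)}=1/\sqrt{2}$ regardless of $d_b$. Applied to the three relevant edges $xu$, $xy$, $yu$ (each of which has $x$ or $y$ as a degree-$2$ endpoint), this yields $ABC(G^\star)-ABC(G) = \tfrac{1}{\sqrt 2}+\tfrac{1}{\sqrt 2}-\tfrac{1}{\sqrt 2} = \tfrac{1}{\sqrt 2}$, with no dependence on $d_u$ at all.

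The parallel bookkeeping for $ABS$ does not enjoy the same universal simplification, but it does not need one. The new edge $xy$ joins two degree-$2$ vertices and contributes $1/\sqrt 2$, while the old edge $xu$ and the new edge $yu$ each contribute $\sqrt{d_u/(2+d_u)}$, because both have one degree-$2$ endpoint and share the other endpoint $u$. Those two $u$-dependent terms cancel in the difference, leaving $ABS(G^\star)-ABS(G)=1/\sqrt 2$ as well, which matches the $ABC$ net change and proves \eqref{ksi}. I do not expect any real obstacle; the only point that requires a touch of care is verifying that the subdivision changes no degree other than creating $d_y=2$, so that every edge outside $\{xu,xy,yu\}$ truly contributes identically to both $G$ and $G^\star$ in each of the two sums.
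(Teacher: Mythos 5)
Your proposal is correct and follows essentially the same route as the paper: both arguments reduce the identity to the observation that the subdivision adds a net contribution of $1/\sqrt{2}$ to each of $ABC$ and $ABS$. You merely spell out the edge-by-edge bookkeeping (the universal value $1/\sqrt{2}$ of $ABC$-summands at a degree-$2$ endpoint, and the cancellation of the two $\sqrt{d_u/(2+d_u)}$ terms in $ABS$) that the paper's terser proof leaves implicit.
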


\begin{proof}
Bearing in mind the way in which the graph $G^\star$ was constructed, we see that
$$
ABC(G^\star) = ABC(G) + \sqrt{\frac{d_x+d_y-2}{d_x\,d_y}} = ABC(G) + \frac{1}{\sqrt{2}}
$$
and
$$
ABS(G^\star) = ABS(G) + \sqrt{\frac{d_x+d_y-2}{d_x+d_y}} = ABS(G) + \frac{1}{\sqrt{2}}\,.
$$
\end{proof}

Proposition \ref{rem2} implies that if there is a graph $G$ of order $n$, possessing a vertex
of degree 2, for which $ABC(G)-ABS(G)=\Theta$, then for any $p \geq 1$ there exist graphs
of order $n+p$ with the same $\Theta$-value.

The situation with graphs possessing pendent vertices is much less simple.
In what follows we present our results pertaining to trees.
By means of computer search we established the following.

\begin{observation} \label{obs1}
(a) All trees of order $n$\,, $3 \leq n \leq 10$, have the property $ABC>ABS$. \\
(b) The smallest tree for which $ABC<ABS$ is depicted in Fig. \ref{Fig4}.
For $n=11$, this tree is unique satisfying $ABC<ABS$. \\
(c) For $n=12,13,14$, and $15$, there exist, respectively, $6, 31, 134$, and $564$ distinct
$n$-vertex trees for which $ABC<ABS$. \\
(d) The tree depicted in Fig. \ref{Fig4} possess vertices of degree $2$. Therefore, from Proposition $\ref{rem2}$ it follows
that there exist $n$-vertex trees with property $ABS>ABC$ for any $n \geq 11$.
\end{observation}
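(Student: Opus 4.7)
The approach is to verify the claims in Observation \ref{obs1}(a)--(c) by exhaustive computer enumeration of unlabelled trees of order at most $15$, and then to derive part (d) directly from part (b) combined with Proposition \ref{rem2}. Parts (a)--(c) quantify over finite isomorphism classes, so a systematic search is in principle decisive; the substantive issues will be (i) generating each isomorphism class exactly once, and (ii) certifying the sign of a small real number involving several nested square roots.

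First, I would enumerate all non-isomorphic trees on $n$ vertices for $3 \le n \le 15$ using a standard canonical-form generator (e.g., Beyer--Hedetniemi level sequences, or the Wright--Richmond--Odlyzko--McKay constant-average-time algorithm), and verify the output counts against the known sequence $1,2,3,6,11,23,47,106,235,551,1301,3159,7741$ (OEIS A000055). For every generated tree $T$ I would record the multiset of edge-endpoint degree pairs $\{(d_u,d_v) : uv\in \mathbf E(T)\}$ and evaluate
$$
\Delta(T) \;:=\; ABC(T) - ABS(T) \;=\; \sum_{uv \in \mathbf E(T)} \sqrt{d_u+d_v-2}\left(\frac{1}{\sqrt{d_u d_v}} - \frac{1}{\sqrt{d_u+d_v}}\right),
$$
then classify each tree by the sign of $\Delta(T)$ and tally the counts per order $n$. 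The tallies will directly yield (a), (b), and (c) provided the sign of $\Delta(T)$ is determined correctly in every case.

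The main obstacle is precisely that sign certification, since $\Delta(T)$ can be very small and involves cancellation of transcendental-looking square roots (although each summand is in fact algebraic). A naive double-precision evaluation may mis-classify borderline trees, particularly at $n=11$ where the first negative example appears. I would therefore perform every evaluation in arbitrary-precision floating point (for instance via \texttt{mpmath} with at least $50$ decimal digits, or rigorously via \texttt{Arb} ball arithmetic) so that the sign of $\Delta(T)$ is certified rather than guessed. A useful qualitative cross-check is that a per-edge contribution to $\Delta$ is positive exactly when $(d_u,d_v)\in\{(1,1),(1,2),(2,1)\}$ (this is the inequality used in Proposition \ref{rem1}) and is otherwise negative, so trees with $\Delta<0$ must strike a specific balance between pendent and non-pendent edges; as a further safeguard, the comparison can be reduced to an inequality between sums of square roots of rationals of bounded height and discharged by symbolic manipulation in a computer algebra system.

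Finally, for part (d) no further computation is needed: once part (b) locates the unique $11$-vertex witness $T^\ast$ of Fig.\ \ref{Fig4}, I identify a vertex $x$ of degree $2$ in $T^\ast$ and apply Proposition \ref{rem2} iteratively, subdividing an edge incident to $x$ (the new vertex $y$ again has degree $2$, so the induction continues). By \eqref{ksi} the value of $ABC-ABS$ is preserved at each step, so for every $n \ge 11$ one obtains an $n$-vertex tree with $ABS>ABC$.
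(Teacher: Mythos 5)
Your overall approach coincides with the paper's: parts (a)--(c) rest on an exhaustive computer enumeration of all non-isomorphic trees of each order (the paper records this only as ``by means of computer search''), and part (d) follows from Proposition \ref{rem2} by repeatedly subdividing an edge incident to a degree-$2$ vertex of the tree in Fig.~\ref{Fig4}, exactly as you describe. The extra care about canonical generation and certified sign evaluation is sensible, though it goes beyond anything the paper documents.

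One correction to your ``qualitative cross-check'': the per-edge contribution to $\Delta = ABC - ABS$, namely $\sqrt{d_u+d_v-2}\,\bigl(1/\sqrt{d_u d_v}-1/\sqrt{d_u+d_v}\bigr)$, is positive precisely when $\min\{d_u,d_v\}=1$ and $\max\{d_u,d_v\}\ge 2$, that is, for \emph{every} pendent edge and not only for $(1,2)$-edges; it vanishes for the pairs $(1,1)$ and $(2,2)$ and is negative otherwise. This is exactly what Proposition \ref{rem1} uses ($d_u d_v \ge d_u+d_v$ when $\min\{d_u,d_v\}\ge 2$) and what the paper's function $f$ encodes via $f(1,y)<0$ for all $y\ge 2$. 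As you state it, the cross-check would misclassify the sign of the contribution of every $(1,y)$-edge with $y\ge 3$ (for instance a $(1,5)$-edge contributes $\sqrt{4/5}-\sqrt{4/6}>0$ to $\Delta$) and would therefore conflict with your own tallies if implemented literally. The main enumeration argument, and hence the verification of (a)--(d), is unaffected.
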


\begin{figure}[!ht]
 \centering
  \includegraphics[width=0.25\textwidth]{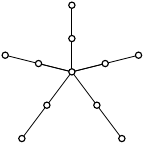}
   \caption{The smallest tree for which $ABC<ABS$.}
    \label{Fig4}
     \end{figure}

\vspace*{5mm}

\begin{observation} \label{obs2}
No tree of order $n$\,, $3 \leq n \leq 15$, has the property $ABC=ABS$. However,
there is a family of four $15$-vertex trees, shown in Fig. \ref{Fig5}, whose
$ABC$- and $ABS$-values are remarkably close. For each of these trees:
$ABC\approx10.184232$ and $ABS\approx10.184135$.
\end{observation}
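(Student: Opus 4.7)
The plan is to verify Observation \ref{obs2} by exhaustive computer enumeration, following the same methodology that produced Observation \ref{obs1}. First I would generate the complete list of pairwise non-isomorphic trees on $n$ vertices for each $3 \leq n \leq 15$, using a canonical free-tree generator such as the \texttt{gentreeg} program from the \texttt{nauty/gtools} suite, or the Wright--Richmond--Odlyzko--McKay constant-delay algorithm. The counts involved ($1,1,1,2,3,6,11,23,47,106,235,551,1301$ for $n=3,\ldots,15$) are modest, so storing every tree and iterating over it is fully feasible.

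For each tree $T$ in the list I would read off the multiset $M(T)=\{(d_u,d_v):uv\in \mathbf E(T)\}$ of degree-pairs on edges, and from $M(T)$ compute the two sums
\[
ABC(T)=\sum_{(i,j)\in M(T)}\sqrt{\frac{i+j-2}{ij}},\qquad ABS(T)=\sum_{(i,j)\in M(T)}\sqrt{\frac{i+j-2}{i+j}},
\]
in high-precision arithmetic (for example, using interval arithmetic or a package such as \texttt{mpmath} with a few hundred digits). From these one decides the sign of $ABC(T)-ABS(T)$ and records $|ABC(T)-ABS(T)|$.

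The first assertion, that no such tree satisfies $ABC(T)=ABS(T)$, is then read off from the table: for every $T$ one obtains a nonzero value of $|ABC(T)-ABS(T)|$ bounded away from zero by the guaranteed precision. The second assertion reduces to sorting the $15$-vertex trees by $|ABC(T)-ABS(T)|$ and exhibiting the four entries at the top of the list; since these four trees appear to share the same edge degree-pair multiset $M(T)$, both $ABC$ and $ABS$ are literally equal across the family, which explains the common numerical values $10.184232\ldots$ and $10.184135\ldots$.

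The main obstacle is rigour rather than feasibility: one must rule out the possibility that some near-equal pair is actually an exact equality masked by round-off. I would handle this by running the computation in interval arithmetic and, whenever the enclosing interval for $ABC(T)-ABS(T)$ straddles zero, raising the precision until it does not; equivalently, one may certify non-equality algebraically by noting that each of $ABC(T)$ and $ABS(T)$ is an integer combination of square roots of small rationals of the form $\sqrt{(i+j-2)/(ij)}$ and $\sqrt{(i+j-2)/(i+j)}$ with $i,j\leq 14$, a finite family whose $\mathbb{Q}$-linear relations can be exhaustively checked. For the four trees of Fig.~\ref{Fig5} the gap $|ABC-ABS|\approx 10^{-4}$ is already well above any realistic numerical noise, so the certification step is light.
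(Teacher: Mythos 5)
Your approach is the same as the paper's: Observation \ref{obs2} is not given a formal proof in the paper but is established by exhaustive computer search over all trees of the stated orders, exactly as you propose (your added discussion of interval arithmetic and algebraic certification only strengthens this). One minor slip: the counts you quote ($1,1,1,2,3,6,\ldots,1301$) are the numbers of trees for $n=1,\ldots,13$, not $n=3,\ldots,15$; the correct sequence for $n=3,\ldots,15$ ends $\ldots,551,1301,3159,7741$, which is still entirely feasible and does not affect the argument.
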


\begin{figure}[!ht]
 \centering
  \includegraphics[width=0.7\textwidth]{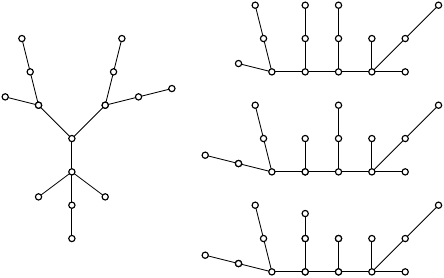}
   \caption{A family of trees with nearly equal $ABC$- and $ABS$-values.}
    \label{Fig5}
     \end{figure}

Next, we show that the inequality $ABS > ABC$ is satisfied by a reasonably large class
of graphs, namely by the line graphs.
If $G$ is the line graph of a connected $n$-vertex graph $K$ such that $2\le n \le 4$, then from the discussion made in the previous part of this section one can directly obtain the classes of graphs satisfying (i) $ABS(G)>ABC(G)$, (ii) $ABS(G)<ABC(G)$, (iii) $ABS(G)=ABC(G)$. Consequently, we assume that $n\ge 5$.

\begin{theorem} \label{thm-1}
If $G$ is the line graph of a connected $n$-vertex graph $K$ such that $n \geq 5$ and that $K \not\in \{ P_n, C_n\}$, then $ABS(G)>ABC(G)$.
\end{theorem}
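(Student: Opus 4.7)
The plan is to split on whether $K$ contains a vertex of degree $1$, using throughout the line-graph identity $d_{uv}(L(K)) = d_u(K) + d_v(K) - 2$ for every $uv \in \mathbf{E}(K)$.

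\textbf{Case A: minimum degree of $K$ is $\ge 2$.} Then every vertex of $L(K)$ has degree at least $2$, so Proposition~\ref{rem1} applies to $L(K)$ and gives $ABC(L(K)) \le ABS(L(K))$, with equality iff $L(K) \cong C_m$ for some $m$. In the equality case every edge $uv$ of $K$ satisfies $d_u + d_v = 4$; if some vertex $v$ of $K$ had $d_v \ge 3$, all its neighbors would be forced to be leaves, so $d_v = 3$ and $K \cong K_{1,3}$ has $4$ vertices, contradicting $n \ge 5$. Hence $K$ is $2$-regular and connected, i.e.\ $K \cong C_n$, contrary to the hypothesis. The inequality is therefore strict.

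\textbf{Case B: minimum degree of $K$ is $1$.} Since $K$ is connected with $n \ge 5$ and $K \ne P_n$, $K$ has at least one vertex of degree $\ge 3$. Using the partition of $\mathbf{E}(L(K))$ into the cliques indexed by $\mathbf{V}(K)$ (each vertex $v$ of $K$ with $d_v \ge 2$ supplies $\binom{d_v}{2}$ edges of $L(K)$), I would write
\[
T \;:=\; ABS(L(K)) - ABC(L(K)) \;=\; \sum_{v \in \mathbf{V}(K)} C_v,
\]
where
\[
C_v \;=\; \sum_{\{e,e'\} \subset E_v} F\bigl(d_e(L(K)),\,d_{e'}(L(K))\bigr), \qquad F(s,t) := \sqrt{\tfrac{s+t-2}{s+t}} - \sqrt{\tfrac{s+t-2}{st}},
\]
and $E_v$ is the set of $K$-edges incident to $v$. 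An elementary sign check shows $F(s,t) \ge 0$ when $s,t \ge 2$ (equality iff $s = t = 2$) and $F(1,t) < 0$ for $t \ge 2$. Consequently $C_v \ge 0$ for every $v$ except those with $d_v = 2$ and exactly one leaf neighbor, for which $C_v = F(1, d_w) < 0$, $w$ denoting the non-leaf neighbor. I would then charge each such negative $C_v$ to the unique vertex $u^\ast$ of $K$ of degree $\ge 3$ reached by following the hanging path of $K$ away from the leaf.

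\textbf{Main obstacle.} What remains is to show that at each host $u^\ast$ of degree $d \ge 3$, the non-negative contributions $C_{u^\ast}$ together with the positive terms $F(2, d)$ arising at the degree-$2$ neighbor of $u^\ast$ along every hanging path strictly exceed the sum of the negative terms $F(1, \cdot)$ charged to $u^\ast$. Two simplifications reduce this to a finite arithmetic check: first, Proposition~\ref{rem2} permits contracting long degree-$2$ chains of $L(K)$ without altering $T$, so every hanging path and every inter-high-degree chain of $K$ may be assumed to contain at most one internal degree-$2$ vertex; second, the elementary pointwise inequality $F(1,d) \le F(1,2) + F(2,d)$ valid for $d \ge 3$ shows that replacing every hanging path by a length-$2$ one can only decrease $T$, so this configuration is the worst. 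The remaining case analysis, parametrised by $d$ and by the numbers of leaf, length-$2$-hanging-path, and high-degree neighbors of $u^\ast$, is tightest at $d = 3$, where already the two smallest positive pair contributions $F(2,3)$ combined outweigh the worst single negative $F(1,2)$; all larger $d$ or mixed-neighbor configurations satisfy the inequality with even more slack, giving $T>0$.
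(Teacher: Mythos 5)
Your argument is sound and reaches the conclusion by a genuinely different route from the paper. The paper works entirely inside $G=L(K)$: it invokes the Harary--Beineke characterization (Lemma~\ref{lem-1}) to rule out adjacent pendent paths, decomposes $G$ into ad hoc connected subgraphs $G_1,\dots,G_k$ each carrying at most one pendent path, and disposes of two exceptional graphs $H_1,H_2$ by direct computation. You instead pull everything back to $K$ via the Krausz partition of $\mathbf E(L(K))$ into the cliques $E_v$, $v\in\mathbf V(K)$, which makes the location of the negative contributions completely transparent: $C_v<0$ exactly when $d_v=2$ and $v$ has a leaf neighbour, and the negative term is $F(1,d_w)$. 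This buys you a cleaner bookkeeping (no forbidden-subgraph lemma, no exceptional graphs --- your $H_1$ case is just the configuration $d=3$, two leaves, one length-$2$ hanging path, and it falls out of the same arithmetic), and your Case~A disposes of leafless $K$ in one line via Proposition~\ref{rem1}, where the only subtlety --- $L(K)\cong C_3$ also arises from $K_{1,3}$ --- is excluded by $n\ge5$ exactly as you say. What the paper's route buys in exchange is that the degree constraints it extracts from Lemma~\ref{lem-1} (every neighbour of $v_2$ has degree at least $d_{v_2}-1$) shorten its case analysis considerably.

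One loose end: your closing finite check is asserted rather than executed, and the stated per-path accounting (``two $F(2,3)$ pairs outweigh one $F(1,2)$'') only covers a single hanging path at a degree-$3$ host. When $b\ge2$ length-$2$ hanging paths meet at a vertex $u^\ast$ with $d_{u^\ast}=3$, the clique $E_{u^\ast}$ has only $\binom{3}{2}=3$ pairs to distribute, so you cannot allocate two disjoint pairs per path; you need the extra observation that a pair of hanging-path edges contributes $F(3,3)\approx0.1498>|F(1,2)|\approx0.1298$ on its own. With that, the cases $b=1,2,3$ at $d=3$ give $2F(2,3)\approx0.135$, $F(3,3)+2F(2,3)\approx0.285$, and $3F(3,3)\approx0.449$ against $b\,|F(1,2)|$, and all pass; the table should be written out to make the proof complete.
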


In order to prove Theorem \ref{thm-1}, we need some preparations.

A decomposition of a graph $G$ is a class $\mathcal{S}_G$ of edge-disjoint subgraphs of $G$ such
that $\cup _{S\in \mathcal{S}_G} \mathbf E(S) = \mathbf E(G)$. By a clique in a graph $G$, we mean
a maximal complete subgraph of $G$. A branching vertex in a graph is a vertex of degree at least $3$. By a pendent edge of a graph, we mean an edge whose one of the end-vertices
is pendent and the other one is non-pendent. For $r\geq 2$, a path $u_1\cdots u_r$ in a graph is said to
be pendent if $\min\{d_{u_1},d_{u_r}\}=1$, $\max\{d_{u_1},d_{u_r}\} \geq 3$, and $d_{u_i}=2$ for
$2\le i \le r-1$. If $P:u_1\cdots u_r$ is a pendent path in a graph with $d_{u_r}\ge3$, we say that $P$
is attached with the vertex $u_r$. Two pendent paths of a graph are said to be adjacent if they have a common
(branching) vertex. A triangle
of a graph $G$ is said to be odd if there is a vertex of $G$ adjacent to an odd number of its vertices.

For the proof of Theorem \ref{thm-1} we need the following well-known result:

\begin{lemma}\label{lem-1}{\rm \cite{Harary}}
A graph $G$ is the line graph of a graph if and only if the star graph of order $4$
is not an induced subgraph of $G$, and if two odd triangles have a common edge then the
subgraph induced by their vertices is the complete graph of order $4$.
\end{lemma}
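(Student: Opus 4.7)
The plan is to prove both directions of this biconditional characterization of line graphs separately, addressing necessity (a line graph has these properties) and sufficiency (these properties force the graph to be a line graph).

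For the necessity direction, I would assume $G = L(H)$ for some graph $H$. To rule out an induced $K_{1,3}$, suppose for contradiction that vertices $v,a,b,c$ of $G$ induce a star with center $v$. Identify $v$ with an edge $xy$ of $H$ and $a,b,c$ with edges of $H$, each of which meets $\{x,y\}$ but which are pairwise vertex-disjoint. A pigeonhole argument on $\{x,y\}$ forces two of $a,b,c$ to share a common endpoint of $v$, making them adjacent in $L(H)$ and contradicting the fact that they are leaves of the induced star. For the odd-triangle condition, I would classify each triangle of $L(H)$ as either of \emph{star type} (three edges of $H$ sharing a common vertex) or of \emph{triangle type} (three edges of $H$ forming a $3$-cycle). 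A case analysis of which vertices of $L(H)$ can be adjacent to one or three vertices of a given triangle then shows which triangles are odd; in particular one verifies that whenever two odd triangles of $L(H)$ share an edge, their four vertices lie among edges of $H$ that all share a common endpoint, so they induce $K_4$ in $L(H)$.

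For the sufficiency direction, the standard approach is to build a Krausz partition of $G$: a partition of $\mathbf{E}(G)$ into cliques such that every vertex lies in at most two of the partition classes. Once such a partition is in hand, one reconstructs $H$ by taking its vertices to be the partition classes (with extra pendants added for vertices of $G$ that lie in only one class), whereupon each vertex of $G$ corresponds to the edge of $H$ joining the two classes containing it, and $G = L(H)$ follows. To construct the partition I would start from the maximal cliques of $G$; the absence of an induced $K_{1,3}$ is used to prove that each vertex of $G$ belongs to at most two maximal cliques, and the odd-triangle condition is invoked precisely to resolve the ambiguity that arises when a triangle of $G$ is contained in no $K_4$, by forcing its three edges into a single partition class.

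The main obstacle is the sufficiency direction, specifically that delicate case where triangles of $G$ are not extendable to $K_4$. The odd-triangle hypothesis is exactly the combinatorial ingredient that distinguishes star-type from triangle-type triangles and guarantees a consistent assignment of edges to Krausz classes; I would expect the bulk of the work to go into showing that this assignment is well defined (edges in two triangles of different types do not create conflicts) and that each vertex ends up in at most two classes. Since the lemma is cited as well known, I would follow the classical van Rooij--Wilf style argument via Krausz partitions rather than try to reinvent a shorter route.
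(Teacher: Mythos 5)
The paper does not prove this lemma at all: it is quoted from Harary's book (the van Rooij--Wilf characterization of line graphs) and used as a black box, so there is no in-paper argument to measure your attempt against. Your outline follows the standard classical route. The necessity half is essentially complete and correct: the pigeonhole argument on the two endpoints of the edge of $H$ corresponding to the centre of the claw rules out an induced $K_{1,3}$, and the classification of triangles of $L(H)$ into star type and triangle type works --- since $H$ is simple, every vertex of $L(H)$ outside a triangle-type triangle is adjacent to an even number ($0$ or $2$) of its vertices, so every odd triangle is of star type, and two star-type triangles sharing an edge of $L(H)$ share two edges of $H$, which forces all four underlying edges through a single vertex of $H$ and hence an induced $K_4$.

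The sufficiency half, however, is a plan rather than a proof, and that is where the entire difficulty of the theorem lives. You correctly name the target (a Krausz partition of $\mathbf E(G)$ into cliques with every vertex in at most two classes) and correctly locate where each hypothesis must be used, but the construction itself --- proving from the absence of an induced $K_{1,3}$ that each vertex lies in at most two of the chosen cliques, and proving from the odd-triangle condition that the assignment of triangles not contained in a $K_4$ to partition classes is consistent --- is not carried out; these verifications are the substance of the van Rooij--Wilf argument and occupy several pages in the literature. As written, your attempt establishes only the easy direction. Since the paper itself treats the lemma as a known cited result, the pragmatic fix is to do the same; if a self-contained proof is genuinely wanted, the consistency argument for the Krausz classes must be written out in full.
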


We can now start with the proof of Theorem \ref{thm-1}.

\begin{proof}[Proof of Theorem \ref{thm-1}]
Since $K \not \cong P_n$, the graph $G$ has at least one cycle. If $G$ is one of the two
graphs $H_1,H_2,$ depicted in Fig. \ref{Fig-1}, then one can directly verify that $ABS >ABC$ holds.
In what follows, we assume that $G\not\in \{H_1,H_2\}$.

\vspace{5mm}

\begin{figure}[!ht]
 \centering
  \includegraphics[width=0.70\textwidth]{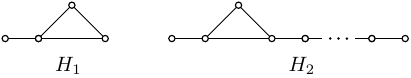}
   \caption{The graphs $H_1$ and $H_2$ mentioned in the proof of Theorem \ref{thm-1}.}
    \label{Fig-1}
     \end{figure}

Consider the difference
$$
ABS(G)-ABC(G) =\sum_{uv\in \mathbf E(G)}\left( \sqrt{\frac{d_u+d_v-2}{d_u+d_v}}-\sqrt{\frac{d_u+d_v-2}{d_u\,d_v}}\, \right)
$$
and define a function $f$ of two variables $x$ and $y$ as
$$
f(x,y) = \sqrt{\frac{x+y-2}{x+y}}-\sqrt{\frac{x+y-2}{xy}}
$$
where $y\ge x \ge 1$ and $y\ge2$. Note that the function $f$ is strictly increasing (in both $x$ and $y$).
Also, if $x$ and $y$ are integers satisfying the inequalities $y\ge x \ge 1$ and $y\ge2$, then the inequality
$f(x,y)<0$ holds if and only if $x=1$.
Thus, \[-0.129757   \approx \frac{1}{\sqrt{3}} - \frac{1}{\sqrt{2}} = f(1,2) \le f(1,y) < 0\] for every $y \ge 2$. Also, \[f(x,y) \ge f(2,3) = \sqrt{\frac{3}{5}} - \frac{1}{\sqrt{2}}\approx 0.0674899> f(2,2)=0\]
for $y\ge x \ge 2$ and $y \ge 3$. Furthermore, we have
$f(1,2)+f(2,y) >0$ for every $y\ge5$.  Thus, if either $G$ has no pendent paths or every pendent path of $G$
has length at least $2$, which is attached with a vertex of degree at least $5$, then $ABS(G)-ABC(G)>0$.
In the remaining proof, we assume that $G\not\in \{H_1,H_2\}$ and that $G$ either has at least one pendent
path of length $1$ or it has at least one pendent path of length at least $2$, which is attached with
a vertex of degree $3$ or $4$.

Let $H'$ be the graph depicted in Fig. \ref{Fig-2}, i.e., $H'$ is obtained from two disjoint graphs
$H_1$ and $H$ by identifying their vertices $z$ and $z'$.

\begin{figure}[!ht]
 \centering
  \includegraphics[width=0.50\textwidth]{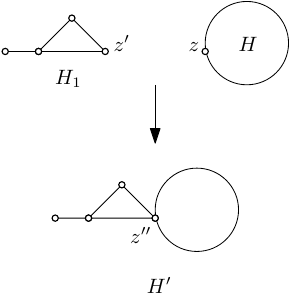}
   \caption{The graphs $H_1$, $H$, and $H'$ mentioned in the proof of Theorem \ref{thm-1}.}
    \label{Fig-2}
     \end{figure}

\vspace{3mm}

\noindent
{\bf Fact 1.} {\it If $G \cong H'$, then the sum of the contributions of the edges of $H_1$ in $G$ to the
difference $ABS(G)-ABC(G)$ is positive.}\\[2mm]

It is a well-known fact that the line graph $G$ can be decomposed into cliques, such that every edge
of $G$ lies on exactly one clique and every non-pendent vertex of $G$ lies on exactly two cliques.
Also, by Lemma \ref{lem-1}, $G$ contains no pair of adjacent pendent paths/edges and hence the number
of pendent edges/paths of $G$ is at most $\left\lfloor \,|\mathbf E(G)|/2 \right\rfloor$.
Bearing this in mind, we decompose $G$ into connected subgraphs $G_1,\ldots,G_k$ in such a way that
every $G_i$ contains at most one pendent path of $G$, such that:
\begin{description}
\item[(a)] if $G_i$ contains a pendent path of $G$ of length $1$ such that the branching vertex (in $G$)
of the considered path has a neighbor of degree $2$ in $G$, then $G_i$ is induced by the vertices of the mentioned
path and the vertices adjacent to the branching vertex (in $G$) of the mentioned path
(for an example, see $G_4$ in Fig. \ref{Fig-3}(b));
\item[(b)] if $G_i$ has a pendent path of length at least $2$ in $G$ or if $G_i$ contains a pendent path
of $G$ of length $1$ such that the branching vertex (in $G$) of the considered path has no neighbor
of degree $2$ in $G$, then $G_i$ consists of the mentioned path together with exactly one additional
edge incident with the branching vertex (in $G$) of the mentioned path (for an example, see Fig. \ref{Fig-3}).
\end{description}

\begin{figure}[!ht]
 \centering
  \includegraphics[width=0.90\textwidth]{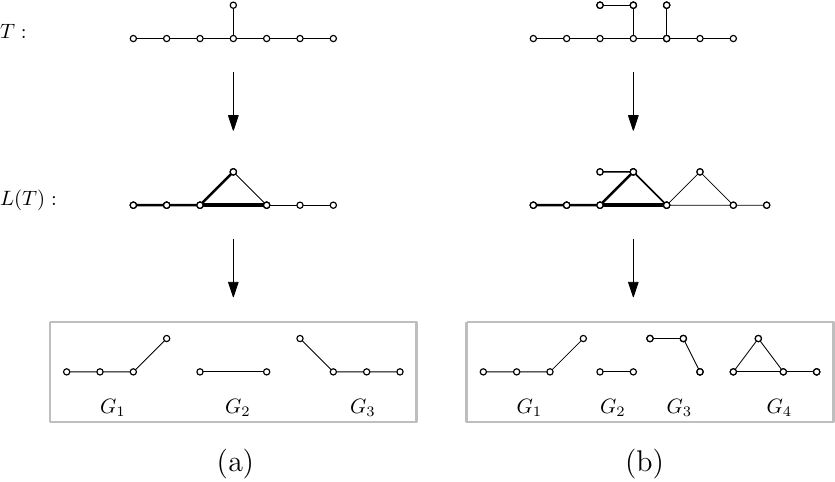}
   \caption{(a) A tree $T$, its line graph $L(T)$, and a decomposition of $L(T)$ into three connected subgraphs $G_1,G_2,G_3$.
    (b) A tree $T$, its line graph $L(T)$, and a decomposition of $L(T)$ into four connected subgraphs $G_1,G_2,G_3,G_4$.}
     \label{Fig-3}
      \end{figure}

In order to complete the proof, it is enough to show that the sum of contributions of all edges of
$G_i$ (in $G$) to the difference $ABS(G)-ABC(G)$ is positive. If a subgraph $G_i$ of $G$ contains
no pendent vertex of $G$ then certainly, the sum of contributions of all edges of $G_i$ (in $G$)
to the difference $ABS(G)-ABC(G)$ is positive.\\[2mm]

\noindent
{\bf Case 1:} a subgraph $G_i$ contains a pendent path of $G$ of length $1$, such that the
branching vertex (in $G$) of the considered path has a neighbor of degree $2$ in $G$.

Let $P:v_1v_2$ be the pendent path of $G$ contained in $G_i$, where $d_{v_1}(G)=1$ and $d_{v_2}(G)\ge3$.
Note that every neighbor of $v_2$ different from $v_1$ in $G$ has degree at least $d_{v_2}(G)-1$ in $G$ (by Lemma \ref{lem-1}).
Thus,  $d_{v_2}(G)=3$ in the case under consideration. Recall that $G\not\in \{H_1,H_2\}$ (see Fig. \ref{Fig-1}).
Consequently, $G_i \cong H_1$ and hence by Fact 1, the sum of contributions of all edges of $G_i$ to the difference
$ABS(G)-ABC(G)$ is positive.

\vspace{3mm}

\noindent
{\bf Case 2:} a subgraph $G_i$ has a pendent path of $G$ of length $1$, such that the branching
vertex (in $G$) of the considered path has no neighbor of degree $2$ in $G$.

Note that $G_i$ is a path of length $2$ in this case. Let $G_i:v_1v_2v'$, where $v_1v_2$ is a
pendent path of $G$, $d_{v_1}(G)=1$, and $d_{v_2}(G)\ge 3$.
If $d_{v_2}(G)\ge4$, then the sum of contributions of all edges of $G_i$ to the difference $ABS(G)-ABC(G)$
is positive because $d_{v'}(G)\ge d_{v_2}(G)-1$ (by Lemma \ref{lem-1}) and $f(1,y)+(y-1,y)>0$ for every $y\ge4$.
Next, assume that $d_{v_2}(G)=3$. Since $d_{v'}(G)\ge3$ in the considered case, the sum of contributions of all
edges of $G_i$ to the difference $ABS(G)-ABC(G)$ is again positive because $f(1,3) + f(3,y) > 0$ for all $y\ge 3$.

\vspace{3mm}

\noindent
{\bf Case 3:} a subgraph $G_i$ has a pendent path of length at least $2$ in $G$.

Note that $G_i$ is itself a path. Let $G_i:v_1v_2\cdots v_rv'$, where $v_1v_2\cdots v_r$ ($r\ge3$)
is a pendent path of $G$, $d_{v_1}(G)=1$, and $d_{v_r}(G)\in \{ 3,4\}$, because $G$ has no pendent
path of length at least $2$, which is attached with a vertex of degree at least $5$ (see the paragraph
appears right before the definition of $H'$ (before Fact 1)).

\vspace{3mm}

\noindent
{\bf Subcase 3.1:} $d_{v_r}(G)=3$.

The vertex $v'$ has degree at least $2$ (in $G$) and $f(1,2)+f(2,3)+f(3,y)\ge f(1,2)+f(2,3)+f(2,3)>0$ for $y\ge2$.
Thus, the sum of contributions of all edges of $G_i$ (in $G$) to the difference $ABS(G)-ABC(G)$ is positive.

\vspace{3mm}

\noindent
{\bf Subcase 3.2:} $d_{v_r}(G)=4$.

In this case, the vertex $v'$ has degree at least $3$ (in $G$) and $f(1,2)+f(2,4)+f(4,y)\geq f(1,2)+f(2,4)+f(3,4)>0$ for $y\geq 3$.
Thus, the sum of contributions of all edges of $G_i$ (in $G$) to the difference $ABS(G)-ABC(G)$ is again positive.

This completes the proof of Theorem \ref{thm-1}.
\end{proof}

\begin{theorem}\label{thm-new}
Let $G$ be a connected graph of size $m$. If the number of pendent vertices of $G$ is at most $\lfloor m/2 \rfloor$ and the number of vertices of degree $2$ in $G$ is zero, then \[ABS(G)>ABC(G).\]
\end{theorem}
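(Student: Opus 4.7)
The plan is to bound $ABS(G) - ABC(G) = \sum_{uv \in \mathbf{E}(G)} f(d_u, d_v)$ edge by edge, using the function
\[
f(x, y) = \sqrt{\frac{x + y - 2}{x + y}} - \sqrt{\frac{x + y - 2}{xy}}
\]
introduced in the proof of Theorem \ref{thm-1}. Because $G$ is connected, has at least one edge, and has no vertex of degree $2$, each edge of $G$ belongs to exactly one of two classes: a \emph{pendent edge}, whose endpoints have degrees $1$ and (at least) $3$, or a \emph{non-pendent edge}, whose endpoints both have degree at least $3$. Writing $p$ for the number of pendent vertices of $G$, there are then exactly $p$ pendent edges and $m - p$ non-pendent edges.

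By the strict monotonicity of $f$ in both variables, already used in the proof of Theorem \ref{thm-1}, every pendent edge contributes at least $f(1, 3)$ and every non-pendent edge at least $f(3, 3)$. Summing,
\[
ABS(G) - ABC(G) \;\geq\; p\cdot f(1, 3) + (m - p)\cdot f(3, 3) \;=\; p\bigl(f(1, 3) + f(3, 3)\bigr) + (m - 2p)\cdot f(3, 3).
\]
The hypothesis $p \leq \lfloor m/2 \rfloor$ yields $m - 2p \geq 0$, while $f(3, 3) = \sqrt{2/3} - 2/3 > 0$, so the last summand is nonnegative. Strict positivity of the whole expression then reduces to the pairing inequality $f(1, 3) + f(3, 3) > 0$, which simplifies to $\tfrac{1}{\sqrt{2}} - \tfrac{2}{3} > 0$, that is $3\sqrt{2} > 4$.

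The only substantive step is verifying the single pairing inequality $f(1, 3) + f(3, 3) > 0$; everything else is routine bookkeeping built on top of the monotonicity of $f$ and the edge classification. Conceptually, this inequality says that one non-pendent edge more than absorbs the negative contribution of one pendent edge, and the assumption $p \leq \lfloor m/2 \rfloor$ is exactly what permits each pendent edge to be offset by a distinct non-pendent edge. Relaxing this assumption allows graphs in which pendent contributions dominate, so one cannot hope to weaken it substantially.
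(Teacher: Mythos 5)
Your proof is correct and takes essentially the same route as the paper's: the same function $f$, the same split of the edge set into $p$ pendent and $m-p$ non-pendent edges with the respective lower bounds $f(1,3)$ and $f(3,3)$, and the same use of $p\le\lfloor m/2\rfloor$ to offset each pendent edge by a non-pendent one. The only difference is a cosmetic rearrangement of the final algebra, reducing everything to $f(1,3)+f(3,3)=\tfrac{1}{\sqrt{2}}-\tfrac{2}{3}>0$, which is exactly the quantity the paper's last line evaluates.
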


\begin{proof}
Consider the function $f$ defined in the proof of Theorem \ref{thm-1}.
Here, \[-0.10939 \approx \frac{1}{\sqrt{2}} - \sqrt{\frac{2}{3}} = f(1,3) \le f(1,y) < 0\] for every $y \ge 3$. Also, \[f(x,y) \ge f(3,3) = \sqrt{\frac{2}{3}} - \frac{2}{3} \approx 0.14983\]
for $y\ge x \ge 3$.
Let $p$ denote the number of pendent vertices of $G$. Then, $m-p\ge p$. Now, by keeping in mind these observations, we have
\begin{align*}
ABS(G)-ABC(G)
&=\sum_{uv\in \mathbf E(G);\,d_u=1}\left( \sqrt{\frac{d_u+d_v-2}{d_u+d_v}}-\sqrt{\frac{d_u+d_v-2}{d_u\,d_v}}\, \right)\\[2mm]
&+ \sum_{\substack{uv\in \mathbf E(G);\\ \min\{d_u,d_v\}\ge3}}\left( \sqrt{\frac{d_u+d_v-2}{d_u+d_v}}-\sqrt{\frac{d_u+d_v-2}{d_u\,d_v}}\, \right)\\[2mm]
&\ge \sum_{\substack{uv\in \mathbf E(G);\\ d_u=1}} \left(\frac{1}{\sqrt{2}} - \sqrt{\frac{2}{3}} \right)+ \sum_{\substack{uv\in \mathbf E(G);\\ \min\{d_u,d_v\}\ge3}} \left(\sqrt{\frac{2}{3}} - \frac{2}{3}\right)\\[2mm]
&= p \left(\frac{1}{\sqrt{2}} - \sqrt{\frac{2}{3}} \right)+ (m-p) \left(\sqrt{\frac{2}{3}} - \frac{2}{3}\right)\\[2mm]
&\ge p \left(\frac{1}{\sqrt{2}} - \sqrt{\frac{2}{3}} \right)+ p \left(\sqrt{\frac{2}{3}} - \frac{2}{3}\right)\\
&>0.
\end{align*}
\end{proof}

\begin{theorem}
Let $G$ be a connected graph of size $m$ such that if $v\in V(G)$ is a vertex of degree $2$ then $v$ has no neighbor of any of the degrees $2$, $3$, $4$. If the number of pendent vertices of $G$ is at most $\lfloor m/2 \rfloor$, then $$ABS(G)>ABC(G).$$
\end{theorem}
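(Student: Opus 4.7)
The plan is to follow the scheme of the proof of Theorem \ref{thm-new}, using the same function $f(x,y)$ from the proof of Theorem \ref{thm-1}, but with sharper lower bounds tailored to the new hypothesis. The key observation is that the constraint on degree-$2$ vertices forces every non-pendent edge $uv$ of $G$ to belong to one of two families: either $\min\{d_u,d_v\} \geq 3$, or $\{d_u,d_v\}$ is of the form $\{2,k\}$ with $k \geq 5$. Since $f$ is strictly increasing in both arguments, the minimum of $f$ over all non-pendent edges compatible with the hypothesis is
\[
\min\{f(3,3),\,f(2,5)\} \;=\; f(2,5) \;=\; \sqrt{5/7}-1/\sqrt{2},
\]
while every pendent edge contributes at least $f(1,2)=1/\sqrt{3}-1/\sqrt{2}$, the global minimum of $f(1,y)$ for $y\ge 2$.

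Letting $p$ denote the number of pendent vertices (equivalently, the number of pendent edges) of $G$, I would split the difference
\[
ABS(G)-ABC(G) \;=\; \sum_{uv\in \mathbf E(G)} f(d_u,d_v)
\]
into pendent and non-pendent contributions, obtaining
\[
ABS(G)-ABC(G) \;\ge\; p\,f(1,2) + (m-p)\,f(2,5).
\]
The hypothesis $p\le\lfloor m/2\rfloor$ gives $m-p\ge p$; since $f(2,5)>0$, replacing $m-p$ by $p$ only weakens the right-hand side, yielding $ABS(G)-ABC(G)\ge p\bigl(f(1,2)+f(2,5)\bigr)$. The case $p=0$ is handled separately: every edge is then non-pendent and contributes at least $f(2,5)>0$, so $ABS(G)-ABC(G)\ge m\,f(2,5)>0$ as $G$ is connected with at least one edge.

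The main (and only genuinely delicate) obstacle is to verify the numerical inequality
\[
f(1,2)+f(2,5) \;=\; \tfrac{1}{\sqrt{3}}+\sqrt{\tfrac{5}{7}}-\sqrt{2}\;>\;0,
\]
which is tight to roughly two decimal places. I would record it rigorously by rearranging to $\sqrt{5/7}>\sqrt{2}-1/\sqrt{3}$ (both sides positive) and squaring, which reduces the claim to $\sqrt{2/3}>17/21$; squaring once more gives $2/3 > 289/441$, i.e.\ $294/441>289/441$, which is immediate. Once this inequality is in hand, $p\ge 1$ combined with the displayed bound yields $ABS(G)-ABC(G)>0$, completing the proof.
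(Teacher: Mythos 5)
Your proposal is correct and follows essentially the same route as the paper's own proof: the same function $f$, the same classification of non-pendent edges (each contributing at least $f(2,5)=\sqrt{5/7}-1/\sqrt{2}$, since a degree-$2$ endpoint forces the other endpoint to have degree at least $5$, while $f(3,3)>f(2,5)$), the same lower bound $f(1,2)$ for pendent edges, and the same use of $m-p\ge p$. Your treatment is in fact slightly more careful than the paper's, since you dispose of the case $p=0$ separately (where the paper's final line would only give $\ge 0$) and you verify the tight inequality $f(1,2)+f(2,5)>0$ by exact squaring rather than by decimal approximation.
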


\begin{proof}
Consider the function $f$ defined in the proof of Theorem \ref{thm-1}. Recall that
\[-0.129757   \approx \frac{1}{\sqrt{3}} - \frac{1}{\sqrt{2}} = f(1,2) \le f(1,y) < 0\] for every $y \ge 2$. Also, $f(x,y) \ge f(2,5) = \sqrt{\frac{5}{7}} - \frac{1}{\sqrt{2}} \approx 0.138047$
for $y\ge x \ge 2$ with $y\ge 5$ and
$f(x,y) \ge f(3,3)>f(2,5)$
for $y\ge x\ge3$.
Let $P$ denote the set of pendent edges of $G$. Then, $|\mathbf E(G)\setminus P | \ge |P|$. Now, by keeping in mind the above observations, we have
\begin{align*}
ABS(G)-ABC(G) &=\sum_{uv\in \mathbf E(G)\setminus P}\left( \sqrt{\frac{d_u+d_v-2}{d_u+d_v}}-\sqrt{\frac{d_u+d_v-2}{d_u\,d_v}}\, \right)\\[2mm]
&+ \sum_{uv\in P}\left( \sqrt{\frac{d_u+d_v-2}{d_u+d_v}}-\sqrt{\frac{d_u+d_v-2}{d_u\,d_v}}\, \right)\\[2mm]
&\ge \sum_{uv\in \mathbf E(G)\setminus P} \left(\sqrt{\frac{5}{7}} - \frac{1}{\sqrt{2}} \right)+ \sum_{uv\in P} \left( \frac{1}{\sqrt{3}} - \frac{1}{\sqrt{2}} \right)\\[2mm]
&= |\mathbf E(G)\setminus P| \left(\sqrt{\frac{5}{7}} - \frac{1}{\sqrt{2}} \right)+ |P| \left( \frac{1}{\sqrt{3}} - \frac{1}{\sqrt{2}} \right)\\[2mm]
&\ge | P| \left(\sqrt{\frac{5}{7}} - \frac{1}{\sqrt{2}} \right)+ |P| \left( \frac{1}{\sqrt{3}} - \frac{1}{\sqrt{2}} \right)\\
&>0.
\end{align*}
\end{proof}

\section{Conclusion and Some Open Problems}

In this paper, we considered the difference between atom-bond-connectivity ($ABC$) and atom-bond
sum-connectivity ($ABS$) indices. In the case of graphs without pendent
vertices, finding the sign of this difference is trivially easy (see Proposition \ref{rem1}). On the other hand, in the case of
graphs possessing pendent vertices, especially for trees, this difference becomes perplexed and the
complete solution of the problem awaits additional studies.

Denote the difference $ABC-ABS$ by $\Theta$. By means of computer search we found that for trees
with $n \leq 15$ vertices (except in the trivial cases $n=1,2$), $\Theta=0$ never happens. It would be of
some interest to extend this finding to higher values of $n$, or to discover a tree (or a graph with minimum degree $1$)
for which $\Theta=0$.

Let $T_n$ be the number of trees of order $n$, and $t_n$ the number of trees of order $n$ for which
$\Theta<0$. We know that $t_n/T_n > 0$ for $n \geq 11$. It is an open problem what the value of
$\lim_{n \to \infty} t_n/T_n$ is, especially whether it is equal to zero or to unity.
%%%%%%%%%%%%%%%%%%%%%%%%%%%%%%%%%%%
%%%%%%%%%%%%%%%%%%%%%%%%%%%%%%%%%%%
%%%%%%%%%%%%%%%%%%%%%%%%%%%%%%%%%%%

\acknowledgment{This research has been funded by the Scientific Research Deanship, University of Ha\!'il, Saudi Arabia, through project
number RG-23\,019.
}

% References should be ordered alphabetically.
\singlespacing

\end{document}